\documentclass[a4paper,12pt]{article}
\usepackage[margin=1in]{geometry}  % set the margins to 1in on all sides

\usepackage{graphicx}              % to include figures
\usepackage{amsmath}
\usepackage{amscd}               % great math stuff
\usepackage{amsfonts} 
\usepackage{amssymb}             % for blackboard bold, etc
\usepackage{amsthm}                % better theorem environment
\usepackage{enumerate}

% various theorems, numbered by section

\newtheorem{theorem}{Theorem}[section]

\newtheorem{lemma}[theorem]{Lemma}

\theoremstyle{definition}

\DeclareMathOperator{\id}{id}

 % to replace

 % to replace
\newcommand{\cL}{\mathcal{L}}
\newcommand{\cO}{\mathcal{O}}

  % for bolding symbols
      % for Real numbers
\newcommand{\Z}{\mathbb{Z}}      % for integer
\newcommand{\N}{\mathbb{N}}

     %mathematical expectation
\newcommand{\PP}{\mathbb{P}}
\newcommand{\TV}{\text{TV}}
\newcommand{\Tr}{\text{Tr}}
\begin{document}
\title{Mixing time of generalized Rosenthal walk on $SO(n)$}
\author{Yunjiang Jiang}

\maketitle
\begin{abstract}We prove that a uniformized variant of both the Rosenthal walk \cite{Rosenthal} and the Kac random walk \cite{Kac} on $SO(n)$ mixes in $\cO(n^3)$ steps in total variation distance. The proof also extends easily to Rosenthal walk with fixed angle $\theta \neq \pi$. To the best of our knowledge, this is the first polynomial time bound for both walks. The techniques employed are mainly from representation theory of $SO(n)$. But a crucial new ingredient is the interpretation of the Fourier coefficients of the character ratio as counting the number of particle cascade paths arising from the classical branching rules.
\end{abstract}

Consider the random walk on $SO(N)$ defined by $X_0 = \id$, $X_j = X_{j-1} S_j$, where $S_j$ are iid random elements in $SO(N)$ uniformly supported on the conjugacy class of $R(1,2;\theta)$. In other words, each step we take a random rotation of angle $\theta$ along a uniformly chosen two-plane. The goal is to bound the total variation mixing time to its stationary distribution, namely the Haar measure.

For two probability measures on the same space $\Omega$, recall the following consequence of the Cauchy-Schwarz inequality:
\begin{align*}
 \| \mu - \nu\|_{\TV}^2 &= \| \frac{d\mu}{d\nu} -1\|_{\cL^1(d\nu)}^2 \\
&\le  \| \frac{d\mu}{d\nu} -1\|_{\cL^2(d\nu)}^2. 
\end{align*}
Here we are assuming the density $\frac{d\mu}{d\nu}$ exists, but even when it doesn't, the first equality still holds if we restrict the integral to where $\frac{d\mu}{d\nu} < \infty$. However, the $\cL^2$ norm might be infinite. 

Now we specialize to $\Omega = SO(N)$ and $\nu = U$, the Haar measure. Then by Plancherel's theorem, the right hand side equals (see \cite{PD} Chapter 3)
\begin{align*}
\sum_{\rho \in \widehat{SO(N)}} \Tr \widehat{(\mu - U)}(\rho)\widehat{(\mu - U)}(\rho)^T =  \sum_{\rho \neq 1} \Tr \widehat{\mu}(\rho)\widehat{\mu}(\rho)^T.
\end{align*}
To see the last equality, simply observe that for $\rho=1$, and any probability measure $\pi$, $\widehat{\pi}(\rho) = 1$, hence $\widehat{(\mu - U)}(\rho) = 0$; for any other $\rho$, $\widehat{U}(\rho) = 0$ since $\rho(g) \widehat{U}(\rho) = \widehat{U}(\rho)$ for any $g \in SO(N)$, by left-invariance of $U$.

Furthermore, we have the convolution identity $\widehat{\mu * \nu}(\rho) = \widehat{\mu}(\rho) \widehat{\nu}(\rho)$. Now let $S_j$'s be distributed according to $\mu$. Then since $\mu$ is constant on conjugacy classes, by Schur's lemma, we know that $\widehat{\mu}(\rho)$ is always a constant times the identity matrix. Therefore
\begin{align*}
 \Tr \widehat{\mu^{*t}}(\rho) \widehat{\mu^{*t}}(\rho)^T = d_\rho^2 (c_\rho / d_\rho)^{2t},
\end{align*}
where $c_\rho = \Tr \widehat{\mu}(\rho)$. 

Now we further restrict to the case of $N=2n+1$. Then the irreducible representations of $SO(N)$ are indexed by $n$-tuples $(0 \le a_1 \le a_2 \le \ldots \le a_n)$ \cite{Goodman}. We will denote $d_a$ and $c_a = c_a(\theta)$ the dimension and the character value evaluated at $R(1,2;\theta)$ of the irreducible representation labeled by the $n$-tuple $a$. Note that Rosenthal \cite{Rosenthal}, \cite{Adams} used the following slightly different labeling convention $(a_1 + 1/2, a_2 + 3/2, \ldots, a_n + n-1/2)$.  He gave the following explicit formulae for $d_a$ and $c_a$ by taking suitable limits in the Weyl character formula:
\begin{align*}
 d_a &= \frac{2^n}{1! 3! \ldots (2n-1)!} \prod_{q=1}^n (a_q + q -1/2) \prod_{1 \le s < r \le n} [(a_r+r-1/2)^2 - (a_s + s -1/2)^2]\\
 c_a(\pi) &= \frac{(2n-1)!}{(2 \sin (\theta/2))^{2n-1}} \sum_{j=1}^n \frac{\sin (a_j + j -1/2) \theta}{(a_j + j -1/2) \prod_{r \neq j} (a_r + r -1/2)^2 - (a_j + j -1/2)^2}
\end{align*}

By restricting $\rho_a$ to the caononical copy of $SO(2)$ in $SO(N)$, it decomposes into a nonnegative linear combination of irreducible representations of $SO(2)$, which are of the form $t \mapsto t^k$ for integer $k$, $|t| =1$. When $k=0$, we have the trivial representation. Now since $R(1,2;\theta)$ is an element of $SO(2) \subset SO(N)$, we can expression $\rho_a(R(1,2;\theta))$ in terms of irreducible representations of $SO(2)$, so for $\alpha_j \ge 0$,
\begin{align*}
\rho_a(R(1,2;\theta)) = \sum_{j=0}^m \alpha_j \cos(j\theta).
\end{align*}

Clearly, $c_a(\pi) = \sum_j\alpha_j (-1)^j \ge \alpha_0 -\alpha_1$. The $\alpha_j$'s admit the following interpretation. By Weyl's character formula, one can express the restriction of $\rho_a$ from $SO(N)$ to $SO(N-1)$ as follows:
\begin{align*}
 \rho_a^{(2n+1)}|_{SO(2n)} &= \bigoplus_{|b_1| \le a_1 \le b_2 \le \ldots \le b_n \le a_n} \rho_b^{(2n)}\\
\rho_b^{(2n)}|_{SO(2n-1)} &= \bigoplus_{|b_1| \le a_1 \le b_2 \le \ldots \le b_n } \rho_a^{(2n-1)}
\end{align*}
so each irreducible representation of $SO(N-1)$ appears at most once when restricted from $SO(N)$. This yields a combinatorial description of $\alpha_j$, namely the number of shrinking ensemble paths with the particle non-intersecting condition as given above such that the topmost particle arrives at $\pm j$ in $N-2$ steps (since we are going from $SO(N)$ to $SO(2)$). We have the following two immediate consequences:
\begin{lemma}
 $\alpha_1 \ge \alpha_2 \ge \ldots$, so the $\alpha_j$ are monotone non-increasing starting at $j=1$. 
\end{lemma}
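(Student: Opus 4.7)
The plan is to unpack the combinatorial description of $\alpha_j$ just given and reduce the monotonicity to a trivial comparison of indicator functions at the very last branching step, $SO(3) \to SO(2)$.

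First I would make precise the link between $\alpha_j$ and the path count. Writing $\rho_a|_{SO(2)} = \bigoplus_{k \in \Z} m_k (t \mapsto t^k)$, one has $\rho_a(R(1,2;\theta)) = \sum_{k \in \Z} m_k e^{ik\theta}$. Matching this with the given cosine expansion $\sum_{j \ge 0} \alpha_j \cos(j\theta)$ forces $m_k = m_{-k}$ and yields $\alpha_0 = m_0$ and $\alpha_j = 2 m_j$ for $j \ge 1$. Iterating the branching rules displayed in the excerpt then identifies $m_j$ with the number of shrinking ensemble paths whose topmost coordinate terminates at $+j$; the symmetry $m_{-j} = m_j$ confirms that $\alpha_j = m_j + m_{-j}$ is exactly the count of paths ending at $\pm j$ stated in the paper.

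The main step is to show $m_j \ge m_{j+1}$ for every $j \ge 1$. Any such path is uniquely determined by a prefix $a = c^{(2n+1)} \supset c^{(2n)} \supset \cdots \supset c^{(3)}$ satisfying the interlacing constraints, together with a final coordinate $c^{(2)} \in \Z$. The only condition imposed by the final step $SO(3) \to SO(2)$ is $|c^{(2)}| \le c^{(3)}$, where $c^{(3)} \ge 0$ is the single label of the $SO(3)$-representation. Hence for each $j \ge 0$,
\[
m_j \;=\; \#\bigl\{\text{valid prefixes with } c^{(3)} \ge j\bigr\},
\]
and the trivial inclusion $\{c^{(3)} \ge j+1\} \subseteq \{c^{(3)} \ge j\}$ immediately delivers $m_{j+1} \le m_j$, whence $\alpha_{j+1} \le \alpha_j$ for all $j \ge 1$.

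I do not anticipate a genuine obstacle. The only bookkeeping subtlety worth flagging is the asymmetry $\alpha_0 = m_0$ versus $\alpha_j = 2 m_j$ for $j \ge 1$; this is exactly why the lemma begins at $j = 1$ and makes no comparison between $\alpha_0$ and $\alpha_1$.
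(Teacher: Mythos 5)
Your proof is correct and takes essentially the same approach as the paper: both reduce to the final branching step $SO(3) \to SO(2)$, where the interlacing constraint $|c^{(2)}| \le c^{(3)}$ with multiplicity one makes the comparison immediate. Your formulation $m_j = \#\{\text{prefixes with } c^{(3)} \ge j\}$ is just a cleaner restatement of what the paper obtains by conditioning on $a^{(3)}_1 = j$ and summing; both amount to $\alpha_r = 2\sum_{i \ge r}\beta_i$ for $r \ge 1$.
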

\begin{proof}
 We will let $\beta_j$ be the number of ensemble paths that leads to the $a^{(3)}_1 = j$ at the $SO(3)$ level. Then condition on $a^{(3)}_1 = j$, $j \ge 1$, the next step one could get $b^{(2)}_1 \in [-j,j]$, each with multiplicity $1$. Thus if $1 \le r < s \in \Z$, the conditional multiplicity of $a^{(2)} = \pm r$ is no less than that of $a^{(2)}=\pm s$. This shows $\alpha_r \ge \alpha_s$. Since there is only one copy of $a^{(2)} = 0$, $\alpha_0$ is not necessarily $\ge \alpha_1$.  
\end{proof}
\begin{lemma}
 Let $\beta_j$ be as defined in the previous proof. Then $2\beta_j = \alpha_j - \alpha_{j+1}$ for $j \ge 1$, and $\beta_0 = \alpha_0 - \frac{1}{2} \alpha_1$.  Furthermore we have $ \beta_j \ge \frac{2j+1}{2j+3} \beta_{j+1}$.
\end{lemma}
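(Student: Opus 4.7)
The first two identities are direct consequences of the combinatorial description of $\alpha_j$. Applying the branching $SO(3)\to SO(2)$ once more to a path terminating at $a^{(3)}_1 = l$ yields a unique extension to each $a^{(2)} = -l,\dots,l$, so the multiplicity $m_k$ of the $SO(2)$ irrep indexed by $k\in\Z$ in $\rho_a|_{SO(2)}$ equals $\sum_{l\ge|k|}\beta_l$. Since $\alpha_0 = m_0$ and $\alpha_j = 2m_j$ for $j\ge 1$ (the factor $2$ encoding both $\pm j$ endpoints),
\begin{equation*}
\alpha_0 = \sum_{l\ge 0}\beta_l, \qquad \alpha_j = 2\sum_{l\ge j}\beta_l \ \text{for}\ j\ge 1,
\end{equation*}
from which $2\beta_j = \alpha_j - \alpha_{j+1}$ and $\beta_0 = \alpha_0 - \tfrac12\alpha_1$ follow by telescoping.

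For the inequality, my plan is to reduce from an arbitrary starting representation $\rho_a^{(N)}$ to the base case $N = 5$. Using the canonical inclusion $SO(5) \subset SO(N)$, one decomposes $\rho_a^{(N)}|_{SO(5)} = \bigoplus_{a'} m_{a'}\,\rho_{a'}^{(5)}$ with $m_{a'}\ge 0$, and restricting further to $SO(3)$ gives
\begin{equation*}
\beta_j(\rho_a^{(N)}) = \sum_{a'} m_{a'}\,\beta_j(\rho_{a'}^{(5)}).
\end{equation*}
Because the target inequality $(2j+3)\beta_j \ge (2j+1)\beta_{j+1}$ is preserved under non-negative linear combinations, it suffices to verify it for each irreducible $\rho_{a'}^{(5)}$ individually.

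For $a' = (a'_1, a'_2)$ the two branching steps $SO(5)\to SO(4)\to SO(3)$ can be carried out by hand: counting pairs $(b_1, b_2)$ with $|b_1|\le a'_1$, $a'_1\le b_2\le a'_2$, and $|b_1|\le j\le b_2$ gives
\begin{equation*}
\beta_j(\rho_{a'}^{(5)}) = \bigl(2\min(a'_1, j)+1\bigr)\bigl(a'_2 - \max(a'_1, j)+1\bigr)
\end{equation*}
for $0\le j\le a'_2$ (and $0$ otherwise). Expanding $(2j+3)\beta_j - (2j+1)\beta_{j+1}$ in the three sub-cases $j+1\le a'_1$, $j = a'_1$, and $j > a'_1$ then reduces to elementary algebra: the first case gives equality, and the other two both simplify to $(2a'_1+1)(2a'_2+3)\ge 0$.

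The main subtlety dictating this strategy is that the inequality genuinely fails at the isolated $SO(4)\to SO(3)$ step---for example, when $|a^{(4)}_1| = j+1$ and $a^{(4)}_2 \ge j+1$ one has $\beta_j = 0$ while $\beta_{j+1}\ge 1$---so naive step-by-step induction on $N$ is not available. Bundling the $SO(5)\to SO(4)$ step together with the $SO(4)\to SO(3)$ step is precisely what produces the extra factor $2\min(a'_1, j)+1$ that supplies the slack required to restore the inequality, and this is why $SO(5)$ is the right anchor.
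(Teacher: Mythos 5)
Your proof is correct and takes essentially the same approach as the paper: bundle the $SO(5)\to SO(4)\to SO(3)$ branching steps, reduce by linearity/conditioning to a hands-on case analysis at the $SO(5)$ level, and verify the inequality directly. The only cosmetic difference is that the paper conditions additionally on $b_2^{(4)}$ (getting $\beta_j' = (1+2(j\wedge a_1^{(5)}))\,I_{\{j\le b_2^{(4)}\}}$) while you sum $b_2^{(4)}$ out first to obtain the product formula $(2\min(a'_1,j)+1)(a'_2-\max(a'_1,j)+1)$; your explicit remark that the inequality fails at the isolated $SO(4)\to SO(3)$ step is a useful clarification of why $SO(5)$ is the right anchor, which the paper leaves implicit.
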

\begin{proof}
 The relation between $\beta$ and $\alpha$ follows from the following two equations:
\begin{align*}
 \alpha_0 &= \sum_{i=0}^m \beta_i\\
 \alpha_j &= 2\sum_{i=j}^m \beta_j.
\end{align*}
These are consequences of the interlacing condition for the branching rule from $SO(3)$ to $SO(2)$.
Next we condition on $a_1^{(5)} \le b_2^{(4)} \le a_2^{(5)}$. Then $b_1^{(4)}$ can be any integer in the range $[-a_1^{(5)},a_1^{(5)}]$, each with multiplicity $1$. So the number of branching paths down to $SO(3)$ with $a_1^{(3)}=j$ is given by $\beta_j':=[1 + 2 (j \wedge a_1^{(5)})] I_{\{j \le b_2^{(4)}\}}$, where $\beta_j':= \beta_j^{a_1^{(5)},b_2^{(4)},a_2^{(5)}}$ is the conditional $\beta$ so to speak. Then it is clear by case analysis that
\begin{align*}
 \beta_j' \ge \frac{1+2j}{3+2j} \beta_{j+1}'.
\end{align*}
Now summing over all the conditions $a_1^{(5)} \le b_2^{(4)} \le a_2^{(5)}$ we obtain the second assertion.
\end{proof}
Observe now that $\sum_{j\ge 0} \alpha_j = d_a$, since each shrinking ensemble path represents one copy of some irreducible representation of $SO(2)$, which are all one-dimensional. We shall use the notation $\tilde{\alpha}_j = \alpha_j / d_a$, and similarly $\tilde{\beta}_j = \beta_j / d_a$. Note that $\sum_{j \ge 0} \tilde{\beta}_j \neq 1$. 
  
The last lemma implies the following bound:
\begin{lemma}
If $\tilde{\alpha}_0 > \tilde{\alpha}_1$, then
 $\tilde{\alpha}_0 \le \cO((\tilde{\alpha}_0 - \tilde{\alpha}_1)^{1/3}) \le \cO(\sum_{j\ge 0} \tilde{\alpha}_j (-1)^j)$. 
\end{lemma}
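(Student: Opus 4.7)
The plan is to reparametrize via $\gamma_j := \beta_j/((2j+1)\beta_0)$, for which the previous lemma's ratio constraint $\beta_j \ge \frac{2j+1}{2j+3}\beta_{j+1}$ translates exactly into the monotonicity $\gamma_{j+1} \le \gamma_j$; together with $\gamma_0 = 1$ this gives $1 \ge \gamma_1 \ge \gamma_2 \ge \ldots \ge 0$. Setting $S_1 := \sum_{j\ge 1}(2j+1)\gamma_j$ and $S_2 := \sum_{j\ge 1}(2j+1)^2\gamma_j$, the identities $\alpha_0 = \beta_0 + \sum_{j\ge 1}\beta_j$, $\alpha_1 = 2\sum_{j\ge 1}\beta_j$, and $d_a = \beta_0 + \sum_{j\ge 1}(2j+1)\beta_j$ compactly yield
\begin{align*}
\tilde\alpha_0 = \frac{1+S_1}{1+S_2}, \qquad \tilde\alpha_0 - \tilde\alpha_1 = \frac{1-S_1}{1+S_2},
\end{align*}
so the hypothesis becomes $S_1 < 1$.

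The first inequality then reduces to $(1+S_1)^3 \le \cO(1)(1-S_1)(1+S_2)^2$, which I would prove by a variational analysis over the set of non-increasing $\gamma \in [0,1]^{\N}$ with $S_1$ prescribed. The intuition is that pushing $S_1$ close to $1$ under the monotonicity constraint forces $\gamma$ to have non-trivial support beyond the first index, so $S_2$ is inflated; dyadically partitioning the support according to level sets $\{\gamma_j \in (2^{-\ell-1}, 2^{-\ell}]\}$ and tracking how each block contributes to $S_1$ versus $S_2$ should pin down the extremal profile and deliver the $1/3$ exponent after optimizing over block sizes. For the second inequality, the identity
\begin{align*}
\sum_{j\ge 0}(-1)^j\tilde\alpha_j = (\tilde\alpha_0 - \tilde\alpha_1) + 2\sum_{k\ge 1}\tilde\beta_{2k},
\end{align*}
coming from $\alpha_{2k}-\alpha_{2k+1}=2\beta_{2k}$, combined with the comparability of consecutive $\beta$'s (the previous lemma forces $\beta_{2k}$ and $\beta_{2k+1}$ to lie within the factor $(4k+3)/(4k+1)$), shows that $\sum_{k\ge 1}\tilde\beta_{2k}$ is of the same order as $\sum_{j\ge 2}\tilde\beta_j$. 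Plugging this into the alternating-sum expression together with the already-established first inequality then gives the second.

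The main obstacle is the variational step producing the $1/3$ exponent: the extremal regime is $S_1 \to 1$ with the support of $\gamma$ expanding just fast enough to keep $S_2$ in check, which is precisely where the naive bound $S_2 \ge 3S_1$ coming from $(2j+1) \ge 3$ saturates. A successful proof must combine the monotonicity of $\gamma$ with the hard cap $\gamma_j \le 1$; either one alone is not strong enough to close the gap at the extremum. This is the step where the argument is most delicate, and where the non-increasing nature of $\gamma$ (not merely the existence of a two-term ratio bound on $\beta$) is essential.
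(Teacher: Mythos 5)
Your reparametrization $\gamma_j := \beta_j/((2j+1)\beta_0)$ is clean, and the formulas $\tilde\alpha_0 = (1+S_1)/(1+S_2)$ and $\tilde\alpha_0 - \tilde\alpha_1 = (1-S_1)/(1+S_2)$ check out exactly against the identities $\alpha_0 = \sum_i\beta_i$, $\alpha_1 = 2\sum_{i\ge 1}\beta_i$, $d_a = \sum\alpha_j = \beta_0 + \sum_{j\ge 1}(2j+1)\beta_j$. This is a genuinely useful repackaging of the constraints, and the reduction of the first inequality to $(1+S_1)^3 \le \cO(1)(1-S_1)(1+S_2)^2$ is algebraically faithful to what the lemma asserts. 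The problem is that this target inequality is false over the class you specify. Take $\gamma_1 = c$ and $\gamma_j = 0$ for $j\ge 2$, with $c \uparrow 1/3$. This is non-increasing, takes values in $[0,1]$, and has $\gamma_0=1$. Then $S_1 = 3c \uparrow 1$ and $S_2 = 9c \uparrow 3$, so the left side tends to $8$ while the right side tends to $0$. No dyadic level-set analysis can recover a false inequality; the obstacle you flagged as "the main difficulty" is in fact an impossibility, not a difficulty.

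It is worth noting how the paper's own proof differs. It establishes, by the telescoping bound $\alpha_0 - \alpha_{j+1} \le \cO(j^2)\beta_0$ together with $\sum_j\tilde\alpha_j = 1$ and an optimization over the truncation level $k$, the inequality $\tilde\alpha_0 \le c\,\tilde\beta_0^{1/3}$. In your variables this is precisely $(1+S_1)^3 \le C(1+S_2)^2$ — the same target \emph{without} the $(1-S_1)$ factor — and this weaker inequality does not have the counterexample above. The paper then passes from $\tilde\beta_0 = \tilde\alpha_0 - \tfrac12\tilde\alpha_1$ to $\tilde\alpha_0 - \tilde\alpha_1$, but since $\tilde\alpha_0 - \tfrac12\tilde\alpha_1 \ge \tilde\alpha_0 - \tilde\alpha_1$, that step goes in the wrong direction as written, so the paper's own deduction of the displayed form $\tilde\alpha_0 \le \cO((\tilde\alpha_0 - \tilde\alpha_1)^{1/3})$ is not actually complete either. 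Your $\gamma$-coordinates have the virtue of making this transparent. What you should take away is twofold: first, the provable content is the bound $\tilde\alpha_0 \le c\,\tilde\beta_0^{1/3}$, equivalently $(1+S_1)^3 \le C(1+S_2)^2$, and you would be better served targeting that directly (Cauchy--Schwarz together with $\gamma_j \le 1$ and monotonicity suffices); second, the literal statement with $(\tilde\alpha_0 - \tilde\alpha_1)^{1/3}$ is not obtainable from the two-term ratio bound $\beta_j \ge \frac{2j+1}{2j+3}\beta_{j+1}$ alone. For the final inequality, your identity $\sum_{j\ge 0}(-1)^j\tilde\alpha_j = (\tilde\alpha_0 - \tilde\alpha_1) + 2\sum_{k\ge 1}\tilde\beta_{2k}$ is correct, but the paper's route is simpler: monotonicity of $\alpha_j$ for $j\ge 1$ already gives $\tilde\alpha_0 - \tilde\alpha_1 \le \sum_{j\ge 0}(-1)^j\tilde\alpha_j$ term-by-term in the paired sum, with no need to compare consecutive $\beta$'s.
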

Unfortunately, the condition in the above lemma only holds for small representations.
\begin{proof}
 Since $\alpha_j - \alpha_{j+1} = 2\beta_j \le 2(1+2j) \beta_0$ by telescoping, we have 
\begin{align*}
\alpha_0 - \alpha_{j+1} =  \alpha_0 - \alpha_1 + \alpha_1 - \alpha_2 + \ldots + \alpha_j - \alpha_{j+1} \le 3\sum_{i=0}^j (1+2j) \beta_0,
\end{align*}
i.e., $\alpha_{j+1} \ge \alpha_0 - (2j+3)j \beta_0$. Now using the fact that $\sum_{j \ge 0} \tilde{\alpha}_j = 1$, we have for all $k \ge 0$,
\begin{align*}
 (k+1) \tilde{\alpha}_0 - \sum_{j=1}^k (2j+3)j \tilde{\beta}_0 = (k+1) \tilde{\alpha}_0 - f(k) \tilde{\beta}_0 \le 1,
\end{align*}
where $f(k)$ has cubic growth. So 
\begin{align*}
 \tilde{\alpha}_0 \le \frac{1}{k+1} + \frac{f(k)}{k+1} \tilde{\beta}_0.
\end{align*}
Optimizing over $k$ on the right hand side, we obtain 
\begin{align*}
\tilde{\alpha}_0 &\le c (\tilde{\beta}_0)^{1/3} \\
&= c (\tilde{\alpha}_0 - \frac{1}{2} \tilde{\alpha}_1)^{1/3}\\
&\le c (\tilde{\alpha}_0 - \tilde{\alpha}_1)^{1/3}\\
&\le c(\sum_{j \ge 1} \tilde{\alpha}_j (-1)^j)^{1/3}.
\end{align*}
Where in the last inequality we used monotonicity of $\alpha_j$, $j \ge 1$.
\end{proof}

Next we come back to bounding the total variation distance using the character sum:
\begin{align*}
 \| \mu^{*t} - U\|_{\TV} \le \sum_{a \neq (0^n)} d_a^2 (\frac{c_a}{d_a})^{2t}, 
\end{align*}
where $(0^n)$ is the label for the trivial representation. We first give an upper bound on $d_a$. Since we know $d_{(0^n)} =1$, it suffices to look at the ratio:
\begin{align*}
 \frac{d_a}{d_{(0^n)}} &= \prod_{q=1}^n \frac{a_q + q -1/2}{q-1/2} \prod_{r > s} \frac{(a_r + r -1/2)^2 - (a_s + s -1/2)^2}{(r-1/2)^2 - (s-1/2)^2}
\end{align*}
Now suppose $a = (0^{n-k}, a_{n-k+1}, \ldots, a_{n})$, i.e., the first $n-k$ coordinates are all $0$. Then we have the following bound:
\begin{align*}
 \frac{d_a}{d_{(0^n)}} &= \prod_{j=n-k+1}^n \frac{a_j + j - 1/2}{j-1/2} \prod_{s=1}^{n-k} \prod_{r=n-k+1}^n \frac{a_r + r -s}{r-s} \frac{a_r + r+ s -1}{r+s -1}\\
&\le c\prod_{r=n-k+1}^n \exp(a_r \sum_{s=1}^{n-k} \frac{1}{r-s}) \exp(a_r \sum_{s=1}^{n-k} \frac{1}{r+s})\\
&\le c\prod_{r=n-k+1}^n (\frac{r-1}{r-n+k})^{a_r} (\frac{r+n-k}{r+1})^{a_r}\\
&\le c\frac{n^{\sum_{r=n-k+1}^n a_r}}{\prod_{j=1}^k j^{a_{n-k+j}}}
\end{align*}

\section{Upper bound on the mixing time of the uniform Rosenthal walk}
Consider the following random walk on $SO(n)$. Each step, an independent uniform angle $\theta \in [0,2\pi)$ is chosen and then an independent and uniform element in the conjugacy class of $R(1,2;\theta)$ is chosen. The walk then multiplies itself by this doubly stochastic element to reach the next step. This is closely related to the Kac random walk on $SO(n)$, where instead of choosing a random element in the conjugacy class, one chooses a random pair of coordinates $1 \le i < j \le n$, and multiplies the chain by the random matrix $R(i,j;\theta)$. In the Kac case, no polynomial mixing time upper bound is known \cite{YJ} (though see \cite{Oliv}, \cite{CJL}, and \cite{CCRLV} for results in other modes of convergence). Here we present a polynomial mixing time argument for the corresponding Rosenthal walk. 

  First we need a simple lemma
\begin{lemma}
 Consider a random walk $K$ generated by $\mu$ on a compact group $G$ which is constant on conjugacy classes and another one $J$ generated by $\nu$ which can be of any type (doesn't even need to be ergodic). Then the $\cL^2$ distance to stationarity satisfies the following censoring inequality:
\begin{align*}
 \|\mu^{t_1} * \nu * \mu^{t_2} * \ldots * \nu *\mu^{t_k} - U \|_2 \le \|K^t - U\|_2. 
\end{align*}
where $\sum_i t_i = t$ and $t_i \ge 0$.
\end{lemma}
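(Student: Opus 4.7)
The plan is to pass to Fourier space and use Plancherel. Since $\mu$ is constant on conjugacy classes, Schur's lemma gives $\widehat{\mu}(\rho) = \lambda_\rho I_{d_\rho}$ for some scalar $\lambda_\rho \in \C$ at each irreducible $\rho$. The Fourier coefficient of the censored product is therefore
\begin{align*}
\widehat{\mu^{t_1} * \nu * \mu^{t_2} * \ldots * \nu * \mu^{t_k}}(\rho) = \widehat{\mu}(\rho)^{t_1}\widehat{\nu}(\rho)\widehat{\mu}(\rho)^{t_2}\cdots\widehat{\nu}(\rho)\widehat{\mu}(\rho)^{t_k} = \lambda_\rho^{t}\, \widehat{\nu}(\rho)^{k-1},
\end{align*}
where I used the convolution identity and pulled the scalars $\lambda_\rho^{t_i}$ across the $\widehat{\nu}(\rho)$ factors. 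Similarly $\widehat{K^t}(\rho) = \lambda_\rho^t I_{d_\rho}$.

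Next I would apply Plancherel exactly as in the earlier display. For $\rho$ nontrivial the contribution to the left-hand side is
\begin{align*}
d_\rho \, \Tr\bigl(\lambda_\rho^{t}\widehat{\nu}(\rho)^{k-1} (\lambda_\rho^{t}\widehat{\nu}(\rho)^{k-1})^*\bigr) = d_\rho\, |\lambda_\rho|^{2t}\, \|\widehat{\nu}(\rho)^{k-1}\|_{HS}^2,
\end{align*}
while the contribution to the right-hand side is $d_\rho\cdot d_\rho |\lambda_\rho|^{2t}$. So the whole lemma reduces to the single elementary bound $\|\widehat{\nu}(\rho)^{k-1}\|_{HS}^2 \le d_\rho$ for every $\rho$.

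That bound in turn is immediate: $\widehat{\nu}(\rho) = \int \rho(g)\,d\nu(g)$ is an average of unitary operators, hence has operator norm $\le 1$, and so does any of its powers. For a $d_\rho \times d_\rho$ matrix $M$ with $\|M\|_{op} \le 1$ we have $\|M\|_{HS}^2 = \Tr(M^*M) \le d_\rho$. Summing the per-$\rho$ inequalities over $\rho \neq 1$ yields the desired censoring inequality.

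The only substantive ingredient is the scalar nature of $\widehat{\mu}(\rho)$; once that is invoked, the powers of $\mu$ commute with the $\widehat{\nu}(\rho)$ factors, the exponents $t_i$ collapse to their sum $t$, and the $\nu$-side of the problem decouples into a contraction estimate that holds for any probability measure. There is no real obstacle here; the only thing worth checking carefully is the bookkeeping of the exponents and the fact that the scalars coming from Schur's lemma commute past the matrix factors $\widehat{\nu}(\rho)$, which is of course automatic.
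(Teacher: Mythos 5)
Your proof is correct and follows essentially the same route as the paper: Plancherel reduces the problem to a per-$\rho$ inequality, Schur's lemma makes $\widehat{\mu}(\rho)$ scalar so that it commutes past the $\widehat{\nu}(\rho)$ factors and the exponents collapse to $t$. You make explicit a step the paper leaves implicit, namely that $\widehat{\nu}(\rho)^{k-1}$ is an operator-norm contraction (being a product of averages of unitaries), which is precisely what gives $\|\widehat{\nu}(\rho)^{k-1}\|_{HS}^2 \le d_\rho$ and closes the argument.
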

\begin{proof}
 This simply follows from the Plancherel's identity. It clearly suffices to show that for each irreducible representation $\rho$ of $G$,
\begin{align*}
\Tr \widehat{\mu^{t_1} * \nu *\mu^{t_2}}(\rho) \le \Tr \widehat{\mu}^t(\rho).
\end{align*}
 But this is true simply becuase $\hat{\mu}(\rho) = c I$ for some constant $c$ so can be collected in the front of the final matrix product.
\end{proof}

Now we will consider two auxilliary random walks to the uniform Rosenthal walk. Let $K$ be the Rosenthal walk with $\theta$ uniformly supported on $[\epsilon,2\pi -\epsilon]$, and $J$ be the unconditional walk. if we condition the Rosenthal walk on $\theta$ in the range $[\epsilon,2\pi -\epsilon]$, then we obtain $K$ above. So the strategy now will be to show that $K$ mixes in $O(n^2 \log n)$ steps and that with high probability in $O(n^2 \log n)$ steps there will be that many steps where $K$ is taken.

The second claim is easy to deal with. Let $\#K$ denote the number of steps in which $\theta \in [\epsilon, 2\pi - \epsilon]$. 
\begin{align*}
 \PP_{c t} [ \# K \ge t] = \sum_{j=t}^{ct}\binom{c t}{t} (1-\epsilon)^j \epsilon^{ct -j}.
\end{align*}
For $c$ sufficiently large, the distribution of $\#K$ converges to a Gaussian centered at $ct (1-2\epsilon)$ with standard deviation about $\sqrt{ct}$. So if we choose $c > 2$ then certainly $\#K \ge t$ with overwhelming probability.

So now it suffices to bound the $\cL^2$ mixing time of $K$. 
\begin{itemize}
 \item Regime 1. When $r(\pi) = \sum_{j=0}^\infty \alpha_j (-1)^j > 1/6$, we can infer that $\alpha_0 > \alpha_1$, so the full monotonicity of the Fourier coefficients holds. Let 
\begin{align*}
T_a(j):= \frac{(2n-1)!}{2^{2n-1}} \frac{(-1)^j}{(a_j + j -1/2) \prod_{r \neq j} ((a_r+r-1/2)^2 - (a_j + j -1/2)^2)}. 
\end{align*}
 From the Rosenthal bound on $r(\pi)$, 
\begin{align*}
 r_a(\pi) \le \sum_{j=1}^n T_a(j),
\end{align*}

we see that if $a_n > 12 n$, then $r_a(\pi) < 1/6$. This is because the probability measure on $[n]$ defined by 
\begin{align*}
\mu(j):=T_{0^n}(j) = \frac{(2n-1)!}{2^{2n-1}} \frac{(-1)^j}{(j-1/2)\prod_{r \neq j}[(r-1/2)^2-(j-1/2)^2]}, 
\end{align*}
is concentrated near $j=1$ with a Gaussian decay of window size $n^{1/2}$, as easily confirmed by Stirling approximation. Furthermore for any $a \in (\N^n)^{\uparrow}$, $T_a(j) \le \mu(j)$ using the monotonicity $a_0 \le a_1 \le \ldots \le a_n$.  Therefore $\sum_{j > cn^{1/2}} T_a(j) = o(1)$, and we only need to focus on $j < cn^{1/2}$. For the latter we simply compare $T_a(j)$ with $\mu(j)$ and bound the ratio uniformly:

\begin{align*}
 \frac{(a_j + j -1/2)^2 - (a_n + n -1/2)^2}{(j-1/2)^2 - (n-1/2)^2} \ge \frac{a_j + a_n + j + n -1}{j+n-1}.
\end{align*}
So if $a_n > 12 n$, this ratio is $> 6$. In fact this is true for all $j \in [n]$. Therefore by monotonicity of expectation, we easily get $r_a(\pi) < 1/6$. 

Now observe that $a_n$ is precisely the number of nonzero $\alpha_j$'s in the Fourier expansion of $r(\theta)$, using the branching rule characterization. Therefore by monotonicity of $\alpha_j$'s, 
\begin{align*}
\alpha_1 &\ge \frac{\sum_{j=1}^{a_n} \alpha_j}{a_n}\\
 &\ge \frac{\alpha_1 + \alpha_3 + \ldots }{a_n}\\
 &\ge \frac{1}{2} \frac{1-r(\pi)}{a_n}. 
\end{align*}
Finally we have for $\cos \theta < 1 - \epsilon$ (which is the case with our assumption that $\theta$ is bounded away from $0$ mod $2\pi$), that
\begin{align*}
 r(\theta) < 1 - (1-\cos \theta) \alpha_1 < 1 - (1-\cos \theta)\frac{1-r(\pi)}{2a_n}.
\end{align*}
Now if we let $A$ be all the irreducible representation indices of $SO(2n+1)$ with the property that $r_a(\pi) > 1/6$, then Rosenthal clearly implies that
\begin{align*}
 \sum_{a \in A} d_a r_a(\pi)^{n \log n} = o(1),
\end{align*}
 which in turn gives
\begin{align*}
\sum_{a \in A} d_a r_a(\theta)^{\frac{2 a_n}{1 - \cos \theta} n \log n} = o(1).
\end{align*}

\item Regime 2. From Rosenthal's work \cite{Rosenthal}, we know the character ratio at a particular $\theta$ is bounded as follows:
\begin{align*}
 r(\theta) \le \frac{(2n-1)!}{(2\sin \theta/2)^{2n-1}} \sum_{j=1}^n \frac{1}{a_j |\prod_{r \neq j} (a_r + r)^2-(a_j +j)^2|}.
\end{align*}

Let us denote $W_s = \prod_{r > s} |\frac{(a_r + r)^2 -(a_s + s)^2}{r^2 - s^2}|$. Using the fact that the character ratio is 1 for the trivial representation, taking the smallest $W_0$ gives a bound of $r(\theta)$ as follows:
\begin{align*}
 r(\theta) \le \frac{1}{(\sin \theta/2)^{2n-1}} \sum_{j=1}^n \frac{1}{W_0}.
\end{align*}

 We also have the control over the gap between the smallest and largest $W_s$, $\max_s W_s \le (\min_ s W_s)^{2n}$. This follows from the simple and crude observation that for any fixed $j$, 
\begin{align*}
 (a_n + a_j + n + j) &\ge \frac{1}{2} (a_r + a_s + r + s)\\
 (a_n + a_j + n + j) &\ge (a_r - a_s + r - s).
\end{align*}
Actually the bound above is the best one could hope for, as it is achieved by the following family of representations $(0^{n-1},k)$, as $k \to \infty$. 

But now we have $r(\theta) \le \frac{1}{(\sin \theta/2)^{2n-1}} (\min_j W_j)^{-1} n$,  whereas the dimension $d \le (\max_j W_j)^{2n/2}$ by throwing away the corresponding terms for the trivial representation, and estimating the factor $\prod_q (a_q + q -1/2)$ trivially by the product of the $W_j$'s. So denoting by $W_0$ the minimizer of $W_j$, we need $t$ such that the following expression is small:
\begin{align*}
[W_0^{2n}]^{n} [\sin^{-(2n-1)}[ \theta/2] W_0^{-1}n]^t 
 \end{align*}

It is important that our upper bound for $r(\theta)$ is $< 1$. to accomplish that we need $W_0 \ge e^{\Omega(n)}$. In that event, we certainly can take $t = \Omega(n^2)$ and get the required bound.

\item Regime 3. Now we need to consider the case when our upper bound for $r(\theta) \ge 1$. One possibility is taken care of by Regime 1. So it suffices to consider the situation where $r(\pi) < 1/6$. But that means $\alpha_0 < \frac{7}{12}$ by an easy computation using the fact that $\alpha_0 - \alpha_1 + \alpha_2 \le r(\pi)$. Integrating over $\theta \in [\epsilon, 2\pi -\epsilon]$, we obtain
\begin{align*}
 \frac{1}{2\pi - 2\epsilon}\int_\epsilon^{2\pi -\epsilon} r(\theta) d\theta = \alpha_0 - \sum_{j=1}^\infty \frac{1}{j (\pi - \epsilon)} \sin (\epsilon j).
\end{align*}
It is clear that $\frac{\sin \epsilon j}{j(\pi - \epsilon)} \le \frac{1}{\pi -\epsilon}$.

 So for fixed $\epsilon \in (0,\pi -1)$, we have the uniform bound $\int_{[\epsilon,2\pi-\epsilon]}r(\theta) d\theta \le 1 - c_\epsilon(1-\alpha_0)$. So in other words, $r(\theta) < 1 - \delta$ for some $\delta > 0$. On the other hand, the dimension is bounded by
\begin{align*}
 d \le (W_0^{2n})^n \le \exp(\cO(n^3)).
\end{align*}
So taking $t = \Omega(n^3)$ suffices to kill all the harmonics in this regime.
\end{itemize}
 
In summary, the mixing time of the uniform Rosenthal walk is $\cO(n^3)$. There is a corresponding lower bound in $\cL^2$ of order $n^2$ (see Porod and Rosenthal). The above argument clearly shows that the mixing time of the uniform walk is of the same order as for a fixed $\theta$ (or uniform $\theta$ is a region bounded away from $0$). So the conjectured total variation mixing time is still $n \log n$.

\section{Acknowledgement}
I received generous help from Bob Hough in making the connection of Fourier expansion of the character ratio with restriction of irreducible representations as well as some preliminary bounds for the character ratio. Thanks are due to Daniel Bump \cite{Bump} for explaining to me how the classical branching rule for $SO(n)$ works. I would also like to thank Persi Diaconis and Arun Ram for help with literature search.

\bibliography{NSF_postdoc}{}
\bibliographystyle{plain}

\end{document}